\newtheorem{teo}{Theorem}[section]
\newtheorem{coro}[teo]{Corollary}
\newtheorem{lema}[teo]{Lemma}
\numberwithin{equation}{section}
\newcommand{\CC}{\mathbb{C}}
\newcommand{\NN}{\mathbb{N}}
\newcommand{\RR}{\mathbb{R}}
\begin{document}

\title[Lie group approach to Grushin operators]
{Lie group approach to Grushin operators}

\subjclass[2000]{35J70 (primary),   22E25, 35A30, 35H20, 43A65 (secondary). }
\keywords{Degenerate elliptic operators,  Grushin operators, Lie groups, heat kernels, Riesz transform.}

\begin{abstract}
We consider a finite system $\{X_1, X_2, \ldots, X_n\}$ of complete vector fields acting 
on smooth manifolds $M$ equipped with a smooth positive measure. We assume that the system satisfies H\"ormander's condition and generates a finite dimensional Lie algebra of type (R). We investigate the sum of squares of the vector fields operator corresponding to this system which can be viewed as a generalisation of the notion of Grushin operators. In this setting we prove the Poincar\'e inequality and Li-Yau estimates for the corresponding heat kernel as well as the doubling condition for the optimal control metrics defined by the system. 
We discuss a surprisingly broad class of examples of described setting. 
\end{abstract}

\author[Jacek Dziuba\'nski ]{  Jacek Dziuba\'nski}
\author [Adam Sikora]{Adam Sikora}
\address{Jacek Dziuba\'nski, Instytut Matematyczny\\
Uniwersytet Wroc\l awski\\
50-384 Wroc\l aw \\
Pl. Grunwal\-dzki 2/4\\
Poland} 
\email{jdziuban@math.uni.wroc.pl}

\address{Adam Sikora, Department of Mathematics, Macquarie University, NSW 2109, Australia}
\email{adam.sikora@mq.edu.au}

\thanks{
 }

\date{\today}
\maketitle

 \setcounter{page}{1} \setcounter{section}{0}

\section{Introduction}

The Grushin operators were introduced  in \cite{Gru} almost 50 years ago and initially were defined as 
family of degenerate operators by the formula 
$$L_k= -\partial_1^2-x_1^{2k}\,\partial_2^2$$
 with $k\in\NN$ acting on the  space $L^2(\RR^2)$. 
 Such operators provide a simple model for degenerate elliptic operators. They have attracted a lot of attention and have been generalised 
 in many different ways. A small sample of  papers devoted to  the Grushin operators can be found for example in 
 \cite{DJ, FGW, JST, MaMu, Ro}. (They are subelliptic operators of H\"ormander type)

 One approach to the operator $L_k$ defined above  is to write it as a sum of squares of vector fields 
 $$
 L_k=-X_1^2 - X_2^2 
 $$
 where $X_1=\partial_1$ and $X_2 =x_1^k\partial_2$ are two vector fields on $\RR^2$. Note that  the vectors $X_1$ and 
 $X_2$ generate a finite dimensional nilpotent Lie algebra. This observation is in a sense a starting point for our discussion in this note. This approach in which one uses the representation of Lie algebra and groups to study Grushin type operators  is not  new. It was used for example in \cite{DJ, JST, RSi, Ro} and in several other works. One of the main aims of this note is to describe the possibly most broad framework 
 for applying Lie group theory to investigate Grushin type operators. 
  Our main  contribution here is to observe that in  the Lie group approach the doubling condition  and  the Poincar\'e inequality, see definitions \eqref{doub} and \eqref{P} below,
 automatically hold and can be verified in a straightforward manner.
 We also describe a surprisingly broad class of operators which can be studied in the proposed framework.  
 
 A significant motivation for our study  and the way in which we interpret it comes from results obtained by Jerison and S\'anchez-Calle in the celebrated papers \cite{Jer} and \cite{JSC}. They proved the  local Poincar{\'e} inequality for vector fields satisfying H{\"o}rmander's condition and local bounds for the heat kernels corresponding to  the sum of squares of such vector fields. Our result can be simply stated as that: if we know in addition that the considered vector fields generates finite dimensional Lie algebra of the
 type (R) then the global  Poincar{\'e} inequality and global heat kernels bounds are valid. Let us recall that type (R) property was used 
 by GuivarcÕh \cite{Gui} and Jenkins \cite{Jen}  in the well-known characterisation of Lie groups with polynomial growth:
 a connected Lie group $G$  has polynomial growth if and only if the Lie algebra
$ L(G)= \mathfrak g$ corresponding  to $G$ is of type (R). That is if for all $X \in  L(G)= \mathfrak g$ the operator 
   ${\rm ad}(X)$ has only purely imaginary eigenvalues. Such groups are solvable-by-compact and every connected nilpotent Lie group is of type (R).

There are many other possible natural  generalisations of Grushin operators which are not based on a Lie group approach to which our results are complementary. An interesting example of such 
    generalisation  was proposed by Franchi, Guti{\'e}rrez, and  Wheeden
 in \cite{FGW}. For a class of non-negative functions $\alpha(x_1) \ge 0 $ defined on $\RR^n$ and contained in 
 strong $A_\infty$ weights class 
 they considered operators acting on $L^2(\RR^{n+m})$ defined by the formula 
 \begin{equation*}
L_{\alpha }=-\Big(\Delta_{x_1}+\alpha(x_1)\Delta_{x_2}\Big)
\end{equation*} 
where $x_1\in\RR^n$, $x_2\in\RR^m$,  and  $\Delta_{x_1}, \Delta_{x_2}$  are the Laplacians on 
$L^2(\RR^n)$ and 
$L^2(\RR^m)$
respectively.

 Another   direction in which Grushin operators can be generalised, which we would like to mention here, was studied in \cite{RSi}. The operators considered in this paper are essentially of the form
 \begin{equation*}
L_{\delta_1,\delta_2}=-\nabla_{x_1}\,|x_1|^{\delta_1}\,\nabla_{x_1}-|x_1|^{\delta_2}\,\Delta_{x_2}
\end{equation*}
with some fixed $0 \le \delta_1<2$, $0 \le  \delta_2$,  $x_1\in\RR^n$, $x_2\in\RR^m$, where $\nabla_{x_1}$  denotes the gradient operator on $L^2(\RR^n)$ and  $\Delta_{x_2}=\nabla_{x_2}^2$  is the Laplacian on $L^2(\RR^m)$.
The approach developed in \cite{RSi} to study heat kernels theory for $L_{\delta_1,\delta_2}$ was based on homogeneity of these operators  and required some complex and tedious calculations. 

An additional advantage of the Lie group approach which we use in this study is that it automatically yields boundness of the corresponding Riesz transform  on all $L^p$ spaces.

Before we state our main results let us describe  a couple of illustrative examples of the operators which we consider here. We find it somehow surprising that such examples can be investigated using the Lie group approach. Namely we shall  prove the doubling condition, the Gaussian two-sided  bounds for the corresponding heat kernels, (the Poincar\'e inequality) 
and boundedness of the Riesz transform for a various Grushin type operators like 
$$
-\Big(\partial_1^2+ \partial_2^2 + (x_1^2+x_2^2-1)^2 \partial_3^2\Big)=-\Big(X_1^2+X_2^2+X_3^2\Big)
$$
where $X_1=\partial_1$,  $X_2=\partial_2$ and $X_3= (x_1^2+x_2^2-1)\partial_3$. 
Another example is
$$
-\Big(\partial_1^2+ \sin^2 x_1 \partial_2^2 +  \cos^2 x_1 \partial_3^2\Big)
$$
where $X_1=\partial_1$,  $X_2=\sin x_1\partial_2$ and $X_3=  \cos x_1\partial_3$. 
We will discuss more applications in a more detailed way is Section \ref{sec5}.

\section{Main Results}

Let  $M$ be a smooth  manifold of dimension $k$ endowed with a  positive measure  $\mu$. In the sequel we always assume that
$\mu$ has a smooth density with respect to any coordinate map on $M$.
By  $TM$  we denote the tangential bundle of $M$ which sections are vector fields on $M$.  We consider a finite family of smooth vector fields 
$\{X_1, X_2,  \ldots, X_n\}$.
We assume that the flow $\exp(tX_i)$ generated by the vector field $X_i$ is defined for all 
$t \in \mathbb R$  globally on the whole manifold
$M$ for all $i=1, \ldots, n$. In the terminology of \cite{GOV} we just say that $X_i$ are complete vector fields. Recall that a commutator of two vector fields $X,Y$, which is also a
vector field, is defined by the formula 
$$
[X,Y]f= XYf-YXf.
$$

In the sequel we always assume that the system $\{X_1, X_2,  \ldots, X_n\}$  together with all their commutators generate a finite dimensional Lie algebra  $\mathfrak g$.
It is one of our central assumptions. 
As a consequence of this assumption,  by virtue of Corollary 1, page 113 of \cite{GOV} all vector fields contained in $\mathfrak g$ are defined globally 
that is they are complete, see however Example 3 page 114 of \cite{GOV}.

In what follow, we will also  assume that the vectors  $X_i$, $i=1, \ldots, n$, are skew-adjoint, which means that 
$$
\int_M X_if(x)g(x)d\mu(x)=-\int_M f(x)X_ig(x)d\mu(x).
$$
For simplicity we will just use the notation $X_i^*=-X_i$.

A simple calculation shows that if $X$ and $Y$ are skew-adjoint and $Z=[X,Y]$
then $Z^*=-Z$. 
It follows that if  $\mathfrak g$ is generated as Lie algebra by a set of  skew-adjoint vector fields then all its  elements  are skew-adjoint.

Note that if  $X$ is a complete vector field on $M$ and $X^*=-X$ then 
\begin{eqnarray*}
&&\frac{d}{dt}\int_M | f(\exp(tX x)|^2d\mu(x)\\
&&=\int_M  \left[\left(X+X^*\right)f(\exp(tY x)\right]  \overline{f(\exp(tX x)}   d\mu(x)=
0
\end{eqnarray*}
for any function $f \in C_c^\infty(M)$.
Hence the flow 
$\exp(tX)$ preserves the measure $\mu$ that is 
\begin{equation}\label{iso}
  \int |f(x)|^p d\mu(x) = \int |f(\exp(tX)x)|^p d\mu(x)
\end{equation}
for all $X \in \mathfrak g$, any integrable function $f \in L^p(M)$, $1 \le p < \infty$  and $t\in \mathbb R$. 

\bigskip

Next, recall that the system of vector fields 
$\{X_1, X_2,  \ldots, X_n\}$ is said to satisfy H\"ormander's condition if a finite number of commutators of 
$X_i$, $i=1, \ldots, n$ linearly spans the tangent space $T_xM$ for all $x \in M$. Recall that we assume that  $\mathfrak g$ is a finite dimensional Lie algebra generated by the system $\{X_1, X_2,  \ldots, X_n\}$. Hence H\"ormander's condition in our setting   simply means that for every $x\in M$ the linear space corresponding to $\mathfrak g$ at $x$ is equal to $TM_x$.

It is well-known that if the system $\{X_1, X_2,  \ldots, X_n\}$ satisfies H\"ormander's condition then one can define on $M$
the corresponding  Carnot-Carath\'eodory distance, which is also sometimes called the optimal control distance or the sub-Riemannian distance.  By $d(x,y)$ we will denote  this distance between  any two points 
$x,y \in M$ and by $B(x, r)$ the open ball with respect to ${d}$ with centre at $x$ and radius $r$.  Then we set  
$V(x,r) =\mu(B(x, r))$. Let us recall that we say that a metric measure space 
satisfies the doubling condition if
\begin{equation}\label{doub}
V(x,2r)\leq CV(x, r)
\end{equation}
for all $x \in M$ and $r>0$. %????see Chapter 3, \cite{CW}

Our study  focuses on  the sum of squares operator corresponding the system
$\{X_1, X_2,  \ldots, X_n\}$
which can defined by the formula 
\begin{equation}\label{L}
L= \sum_{i=1}^n X_iX_i^*=-\sum_{i=1}^n X_i^2.
\end{equation}
The operator $L$ can be precisely defined using quadratic forms techniques. 
We define the corresponding gradient by the formula 
$$
\nabla f = (X_1f,\ldots,X_nf)
$$
for any $f\in C_c(M)$ and set 
$$
|\nabla f(x)|^2 = \sum_{i=1}^n |X_if(x)|^2.
$$
so that 
$$
\|\nabla f\|_2 = \|L^{1/2} f\|_2.
$$

\bigskip

Now we are able to state our major  result

\begin{teo}\label{gaussian1}
Suppose that $M$ is a smooth manifold of dimension $k$ and that a set of  smooth, complete  
vector fields $\{X_1, X_2,  \ldots, X_n\}$ satisfies H\"ormander's condition and  generates a finite dimensional Lie algebra $\mathfrak g$
of type {\rm (R)}. 
%corresponding to a simply connected Lie group $G$ with a polynomial growth. 
We assume in addition that the vectors $X_i$, $i=1, \ldots, n$ are skew-adjoint that is $X_i^*=-X_i$.

Then the optimal control  distance ${d}$  satisfies the doubling condition \eqref{doub} and the heat kernel $\tilde{h}_t$  corresponding to the operator $L$
satisfies the upper and lower Gaussian bounds
\begin{equation}\label{gaussian11}
\frac{C'}{V(x, \sqrt{t})}e^{-c'{d}(x,y)^2\slash t} \le {h}_t(x, y)\leq \frac{C}{V(x, \sqrt{t})}e^{-c{d}(x,y)^2\slash t}.
\end{equation}
In addition  the corresponding Riesz transform is bounded for all $1<p<\infty$
$$
\|\nabla f\|_p \le C_p \|L^{1/2} f\|_p.
$$
\end{teo}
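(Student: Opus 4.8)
The plan is to realise $M$ together with $L$, $d$ and $\mu$ as a quotient of a Lie group of polynomial growth carrying an invariant H\"ormander sub-Laplacian, and then to transport the classical estimates from the group down to $M$. Let $G$ be the connected, simply connected Lie group with Lie algebra $\mathfrak g$. Completeness of the $X_i$ and finite dimensionality of $\mathfrak g$ allow the infinitesimal $\mathfrak g$-action to be integrated (Palais' theorem; cf.\ the completeness statement from \cite{GOV} recalled above) to a smooth action $G\times M\to M$, $(g,x)\mapsto g\cdot x$. Fix $o\in M$ and work on the connected component of $o$; H\"ormander's condition forces the orbit $G\cdot o$ to be open, hence equal to $M$, so the orbit map $\pi\colon G\to M$, $\pi(g)=g\cdot o$, is a surjective submersion and $M\cong G/H$ with $H=\{g\in G: g\cdot o=o\}$ a closed subgroup. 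Each $X_i$ is $\pi$-related to a right-invariant vector field $\widetilde X_i$ on $G$; the $\widetilde X_i$ form a H\"ormander system, and the sub-Laplacian $\widetilde L=-\sum_i\widetilde X_i^{\,2}$ projects to $L$, i.e.\ $e^{-t\widetilde L}(f\circ\pi)=(e^{-tL}f)\circ\pi$. Since $\mathfrak g$ is of type {\rm (R)}, the Guivarc'h--Jenkins theorem \cite{Gui,Jen} shows that $G$ has polynomial volume growth; in particular $G$, and hence $H$, is unimodular, and by \eqref{iso} the $G$-invariant measure on $G/H$ agrees with $\mu$ up to a positive constant. Because $d_G$ and $\widetilde L$ are built from right-invariant data, $d_G$ is right-invariant, $\widetilde h^G_t(g,g')=q^G_t\big(g(g')^{-1}\big)$ for a convolution kernel $q^G_t$, the quotient metric of $d_G$ under $\pi$ is exactly $d$ so that $B(x,r)=\pi\big(B_G(\widetilde x,r)\big)$ for every $\widetilde x\in\pi^{-1}(x)$, and Weil's integration formula yields the fibre representation $h_t(x,y)=\int_H q^G_t\big(\widetilde x\,h\,\widetilde y^{\,-1}\big)\,dh$ whenever $\pi(\widetilde x)=x$ and $\pi(\widetilde y)=y$.

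On $G$ itself everything required is known: for an invariant H\"ormander sub-Laplacian on a Lie group of polynomial growth one has, by the work of Varopoulos, Saloff-Coste and Coulhon (and of Alexopoulos for the Riesz transform), the doubling property of $(G,d_G,\nu)$, a scale-invariant $L^2$ Poincar\'e inequality, two-sided Gaussian bounds for $q^G_t$ with respect to $d_G$, and $L^p$-boundedness of $\nabla_G\widetilde L^{-1/2}$ for all $1<p<\infty$. The point of the argument is the descent of these facts to $M=G/H$. For this one uses that the right action of $H$ on $G$ is free and proper ($H$ closed), preserves $\nu$, $d_G$ and the horizontal distribution, and that $H$, being closed in a polynomial growth group, itself has polynomial growth; combined with $B(x,r)=\pi(B_G(\widetilde x,r))$ and a coarea/Weil computation this gives a volume factorisation $V_G(\widetilde x,r)\asymp V(x,r)\,V_{\mathrm{fib}}(\widetilde x,r)$ in which the fibre factor $V_{\mathrm{fib}}$ is itself doubling.

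Granting this, the proof assembles as follows. The doubling condition \eqref{doub} on $M$ follows from the volume factorisation together with the doubling of $V_G$ and of $V_{\mathrm{fib}}$. The global $L^2$ Poincar\'e inequality on $M$ descends from the one on $G$ by lifting functions and cut-offs through $\pi$, applying the inequality over a $G$-ball and integrating out $H$ (alternatively one upgrades the local Poincar\'e inequality of Jerison \cite{Jer} by means of the doubling just obtained). Since $\mathcal E(f)=\int_M|\nabla f|^2\,d\mu$ is a strongly local, regular Dirichlet form on $L^2(M,\mu)$ whose intrinsic metric is $d$, doubling and Poincar\'e imply, via the Grigor'yan--Saloff-Coste--Sturm equivalence, the two-sided bounds \eqref{gaussian11}; the upper bound can also be read off directly by estimating the fibre integral $\int_H q^G_t$ with the Gaussian bound on $G$ and the volume factorisation, and the lower bound from the parabolic Harnack inequality that accompanies doubling and Poincar\'e. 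For the Riesz transform, the range $1<p\le 2$ is handled by the general theorem of Coulhon and Duong from doubling and the Gaussian upper bound; for $2<p<\infty$ one transfers Alexopoulos' theorem from $G$, using $\widetilde X_i\widetilde L^{-1/2}(f\circ\pi)=(X_iL^{-1/2}f)\circ\pi$ and the fact that the Riesz kernel on $M$ is the $H$-average of the Calder\'on--Zygmund Riesz kernel on $G$, the Calder\'on--Zygmund bounds relative to $(M,d,\mu)$ surviving the averaging. I expect the genuinely delicate step to be exactly this passage to the quotient when $H$ is non-compact --- which already happens for the original Grushin operator, where $G$ is the Heisenberg group and $H\cong\RR$: one must control the fibre volumes $V_{\mathrm{fib}}$ and the decay of $q^G_t$ and of the Riesz kernel along the non-compact fibres uniformly in $x,y,t$, and verify that Calder\'on--Zygmund structure is preserved under the averaging over $H$. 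The right-invariance of $d_G$ and the polynomial growth of $H$ are precisely the ingredients that make this uniformity available.
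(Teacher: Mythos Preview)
Your strategy is sound in outline but differs substantially from the paper's, and the step you yourself flag as ``genuinely delicate'' is not carried out. You propose to realise $M$ as $G/H$, write $h_t$ as a fibre integral of the group heat kernel, and obtain doubling from a volume factorisation $V_G(\widetilde x,r)\asymp V(x,r)\,V_{\mathrm{fib}}(\widetilde x,r)$. But this factorisation is asserted, not proved: when $H$ is non-compact (already for the basic Grushin operator, where $H\cong\RR$ sits inside the Heisenberg group) the fibre volume $V_{\mathrm{fib}}(\widetilde x,r)$ genuinely depends on the basepoint, and establishing the two-sided comparison uniformly in $x$ and $r$ is precisely the content of the theorem, not a preliminary. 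The same issue recurs in your Riesz argument: showing that Calder\'on--Zygmund structure survives averaging over a non-compact $H$ requires exactly the uniform fibre control you have not supplied.

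The paper bypasses the fibre geometry entirely by working with the \emph{Poisson} semigroup rather than the heat semigroup. The group Poisson kernel $\tilde p_t$ enjoys two pointwise properties that the heat kernel lacks: $\tilde p_{2t}\asymp\tilde p_t$ and $|\tilde X_i\tilde p_t|\le Ct^{-1}\tilde p_t$. Because the transfer map $w\mapsto\pi(w)$ preserves positivity and sends $\mathbbm 1$ to $\mathbbm 1$, these inequalities pass verbatim to $p_t=\pi(\tilde p_t)$ on $M$. The gradient bound yields, via a one-line ODE argument, a Harnack inequality $p_t(x,y)\le C\,p_t(x',y)\exp(c\,d(x,x')/t)$; combined with $\int_M p_t(\cdot,y)\,d\mu=1$ this pins down $p_r(\cdot,y)\asymp V(y,r)^{-1}$ on $B(y,r)$, and doubling follows immediately from $p_r\asymp p_{2r}$. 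No quotient structure, no fibre volumes, no coarea. From the resulting on-diagonal bound $p_t(x,x)\le CV(x,t)^{-1}$ the paper passes to heat kernel $L^2\to L^\infty$ bounds by the elementary operator inequality $\|M_{V_t}e^{-t^2L}\|_{2\to\infty}\le\|M_{V_t}e^{-tL^{1/2}}\|_{2\to\infty}\|e^{-t^2L+tL^{1/2}}\|_{2\to2}$, then invokes \cite{CS} to obtain both the full two-sided Gaussian bounds and the Riesz transform for all $1<p<\infty$ in one stroke. Your route via Grigor'yan--Saloff-Coste--Sturm and Coulhon--Duong/Alexopoulos is perfectly respectable once doubling and Poincar\'e are in hand, but the paper's Poisson-kernel trick is what actually delivers those ingredients without the quotient analysis you left open.
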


It is well-known that the two-sided Gaussian estimates \eqref{gaussian11}  imply the Poincar\'e inequality. In fact the Poincar\'e inequality and the doubling condition
are equivalent to estimates \eqref{gaussian11}, see \cite[p. 112]{Sa3} or \cite[Theorem 2.1, p. 20]{GS}.  
Hence Theorem \ref{gaussian1} has the   following Corollary 

\begin{coro}
Under assumption of the above theorem, the manifold  $M$ satisfies  the Poincar\'e inequality 
that is  there is a constant $C>0$ such that for any ball $B$
\begin{equation}\label{P}
\int_{B}|f-f_{B}|^2\,d\mu\leq Cr^2\int_{B}|\nabla f|^2\,d\mu \;,
\end{equation}
where $r$ is the radius of $B$.
\end{coro}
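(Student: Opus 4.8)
The plan is to obtain the Poincar\'e inequality \eqref{P} as an essentially immediate consequence of Theorem \ref{gaussian1}, using the well-known fact that, on a strongly local Dirichlet space, the two-sided Gaussian (Li--Yau) heat kernel bound is equivalent to the conjunction of volume doubling and the scale-invariant $L^2$ Poincar\'e inequality.

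First I would record the Dirichlet form framework that is already implicit in the definition of $L$. Since each $X_i$ is skew-adjoint, the energy
\[
\mathcal{E}(f,f)=\int_M|\nabla f|^2\,d\mu=\sum_{i=1}^n\int_M|X_if|^2\,d\mu,\qquad f\in C_c^\infty(M),
\]
is closable on $L^2(M,\mu)$, and its closure is a regular, strongly local Dirichlet form whose generator is the operator $L$ of \eqref{L} and whose associated heat semigroup has kernel $h_t$. Under H\"ormander's condition the intrinsic distance attached to $\mathcal{E}$ coincides with the Carnot--Carath\'eodory distance $d$ and induces the manifold topology; this identification is classical and is in any case already used in the proof of Theorem \ref{gaussian1}, so I would simply quote it.

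Next I would feed in Theorem \ref{gaussian1} itself, which provides exactly the two ingredients the equivalence needs: the doubling property \eqref{doub} of $(M,d,\mu)$ and the matching upper and lower bounds \eqref{gaussian11} for $h_t$. By the characterisation of Saloff-Coste \cite[p.~112]{Sa3} (see also Grigor'yan and Saloff-Coste \cite[Theorem~2.1, p.~20]{GS}), for a metric measure Dirichlet space the bound \eqref{gaussian11} holds if and only if \eqref{doub} holds together with the $L^2$ Poincar\'e inequality on all metric balls $B=B(x,r)$, with a constant uniform in $B$. Combining this with the conclusions of Theorem \ref{gaussian1} yields \eqref{P} at once, with a constant $C$ depending only on the constants $c,c',C,C'$ of \eqref{gaussian11} and on the doubling constant.

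I do not anticipate a real obstacle: all of the hard analysis is already carried out in Theorem \ref{gaussian1}, and the remaining work is only the routine verification that $\mathcal{E}$ meets the structural hypotheses of the cited equivalence --- regularity, strong locality, completeness of $(M,d)$, and the identification of its intrinsic distance with $d$ --- after which the Poincar\'e inequality follows formally.
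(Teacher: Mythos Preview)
Your proposal is correct and follows exactly the approach the paper takes: the corollary is deduced immediately from Theorem~\ref{gaussian1} by invoking the Saloff-Coste/Gyrya--Saloff-Coste equivalence between the two-sided Gaussian bounds \eqref{gaussian11} and the conjunction of doubling and the scale-invariant Poincar\'e inequality, with the same references \cite[p.~112]{Sa3} and \cite[Theorem~2.1, p.~20]{GS}. The only difference is cosmetic---you spell out the Dirichlet form framework a bit more explicitly than the paper does.
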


The considered operator $L$  is  positive definite and
self-adjoint.
Therefore $L$ admits a
spectral  resolution  $E_L(\lambda)$  and for  any  bounded  Borel
function $F\colon [0, \infty) \to \CC$ one can define the operator $F(L)$
by 
   \begin{equation}\label{equw}
   F(L)=\int_0^{\infty}   F(\lambda)d E_L(\lambda) \;.
   \end{equation}
   By spectral theory the operator $F(L)$ is bounded on $L^2(X)$. Spectral multiplier  theorems investigate under what conditions on function
   $F$ the operator $F(L)$ can be extended to a bounded operator acting on Lebesgue spaces $L^p(X)$ for some range of $p$, see e.g. \cite{DOS, SYY} for more comprehensive discussion.
   
 Let us recall that on any metric measure space the doubling condition \eqref{doub} implies that   there exist constants  $C, \nu$ such that for all $\lambda\geq 1$ and $x\in X$
   \begin{equation}
   V(x, \lambda r)\leq C\lambda^\nu V(x,r). \label{e1.3}
   \end{equation}
   
   Now we able to formulate another consequence of Theorem \ref{gaussian1}. This time we note that Gaussian bounds \eqref{gaussian11} implies the following spectral multiplier result.

\begin{coro}\label{corsm} Let $\nu$ be the exponent in the doubling estimate \eqref{e1.3} corresponding to the manifolds $M$ and the optimal control distance $d$. Suppose that $s > \nu/2 $ and $F \colon [0,\infty) \to \CC$
 is  a  bounded Borel function  such that
     \begin{equation}\label{Ale}
     \sup_{t>0}\| \eta \,\delta_t F \|_{W^{s,\infty}} < \infty,
     \end{equation}
   where $ \delta_t F(\lambda)=F(t\lambda)$ and
   $\| F \|_{W^{s,p}}=\|(I-d^2/d x^2)^{s/2}F\|_{L_p}$. 
    Then under the above assumption  $F(L)$ is
 weak type $(1,1)$ and bounded on all $L^P(M)$ spaces  for all  
 $1<p<\infty$. 
\end{coro}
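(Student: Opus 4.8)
The plan is to obtain Corollary \ref{corsm} as a direct application of the general H\"ormander-type spectral multiplier theorem for non-negative self-adjoint operators whose heat semigroup satisfies Gaussian bounds, in the form established in \cite{DOS} (see also \cite{SYY}). That theorem needs exactly two inputs: that the ambient metric measure space be doubling, and that $e^{-tL}$ have an integral kernel satisfying a Gaussian upper bound. Both are supplied by Theorem \ref{gaussian1}: with $d$ the optimal control distance, $(M,d,\mu)$ satisfies \eqref{doub}, hence \eqref{e1.3} with a finite exponent $\nu$, and $h_t$ obeys \eqref{gaussian11}, in particular the upper Gaussian estimate. So the first (and essentially only) step is to check that the hypotheses of the main theorem of \cite{DOS} hold with this $\nu$; its conclusion, namely that $F(L)$ is of weak type $(1,1)$ and bounded on $L^p(M)$ for $1<p<\infty$ as soon as $\sup_{t>0}\|\eta\,\delta_tF\|_{W^{s,\infty}}<\infty$ with $s>\nu/2$, is then precisely the assertion of Corollary \ref{corsm}.

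For the reader's convenience I would recall the internal mechanism of that argument. The Gaussian upper bound together with the doubling property yield Davies--Gaffney $L^2$ off-diagonal estimates for $e^{-tL}$, and by the method of the wave equation these imply finite propagation speed: the Schwartz kernel of $\cos(t\sqrt{L})$ is supported in $\{(x,y):d(x,y)\le t\}$. One then decomposes $F$ dyadically, $F=\sum_{\ell\in\ZZ}F\,\psi(2^{-\ell}\,\cdot\,)$ for a dyadic partition of unity $\psi$ away from the origin, expresses each piece $F(\sqrt L)\psi(2^{-\ell}\sqrt L)$ as a superposition of wave operators $\cos(t\sqrt L)$ via Fourier inversion, and uses finite propagation speed to confine the corresponding kernel to within $O(2^{-\ell})$ of the diagonal. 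Coupling this localisation with the $L^1\to L^2$ bound $\|e^{-tL}\|_{L^1(B(x,\sqrt t))\to L^2}\lesssim V(x,\sqrt t)^{-1/2}$ coming from \eqref{gaussian11}, and with the Plancherel-type estimate that controls the relevant $L^2$ operator norms by $\|\delta_{2^{\ell}}F\|_{W^{s,\infty}}$, one verifies the integrated H\"ormander regularity condition on the kernel of $F(L)$ needed to run a Calder\'on--Zygmund decomposition; this produces the weak type $(1,1)$ bound. Interpolating with the trivial $L^2$ bound from spectral theory gives $L^p$ for $1<p\le 2$, and since $\bar F$ satisfies the same hypothesis, duality handles $2<p<\infty$.

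The only point that requires genuine care is the threshold $s>\nu/2$: this exponent is exactly what makes the sum over dyadic scales $\ell$ converge, balancing the decay extracted from the $W^{s,\infty}$-norm of $F$ against the growth $V(x,2^{-\ell})^{-1}\approx 2^{\ell\nu}$ of the volume factors entering the kernel estimates. Since $s>\nu/2$ leaves a small margin, the series closes and no new difficulty arises beyond the bookkeeping already carried out in \cite{DOS}; I therefore expect the \emph{only} nontrivial aspect of the proof to be the passage through Theorem \ref{gaussian1}, which has already been done. I would also remark that although the doubling exponent $\nu$ attached to the optimal control metric may be much larger than the topological dimension $k$ of $M$ --- the metric $d$ being sub-Riemannian and possibly highly non-Euclidean at infinity --- it is finite by \eqref{e1.3}, and finiteness of $\nu$ is all that the theorem requires.
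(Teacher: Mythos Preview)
Your proposal is correct and follows exactly the paper's approach: the paper simply observes that the upper Gaussian bound in \eqref{gaussian11} (together with doubling), supplied by Theorem~\ref{gaussian1}, is precisely the input required by the main spectral multiplier theorem of \cite{DOS}, and refers the reader there for details. Your additional sketch of the internal mechanism of \cite{DOS} is accurate but goes beyond what the paper itself provides.
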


For the proof that the upper part of the Gaussian estimates \eqref{gaussian11} implies the above corollary  and more detailed discussion of spectral multipliers  we refer readers to \cite{DOS}. Other interesting  variants of Corollary \ref{corsm}
are discussed in \cite{SYY}.

\section{Preliminaries}

%\begin{proof}}
Before we discuss the proof of Theorem \ref{gaussian1} we need to introduce more notation, recall some known results and 
prove some axillary lemmata. 

Recall that by Lie's third theorem there exists a unique simple connected Lie group $G$
associated to $\mathfrak g$.
Now, let $\tilde{X}_i$, $i=1, \ldots, n$ be a system of left invariant vector fields on the group $G$ corresponding to 
the system $X_i$, $i=1, \ldots, n$. In  the natural representation $\tilde \pi$ of $\mathfrak g$ it holds that $\tilde \pi(\tilde{X}_i) =X_i$.

By \cite[Thorem 2.1 and Corollaries 1, 2,  p. 113]{GOV} there exists an unique action  $ \pi$ of 
the group $G$ on $M$ such that $d\pi =\tilde \pi.$ 
 With some abuse of notation we also denote by $\pi$ the corresponding representation of the group $G$ in the space of bounded operators 
acting on $L^2(M)$ such that 
\begin{equation}\label{pig}
\pi(g)f(x)=f\big(\pi(g)x\big). 
\end{equation}
for any $f\in L^2(M)$. Note that 
$$
\pi\big(\exp(t\tilde{X}_i)\big)f(x)=f\big(\exp(tX_i)x\big) 
$$
for any $f\in L^2(M)$ and $x\in M$. Hence a standard  argument shows that by  \eqref{iso}, for all $g\in G$,  $\pi(g)$ is an isometry 
acting on all $L^p(M)$ spaces for $1 \le p \le \infty$.

Then, following the standard representation theory approach,
for any function $w\in L^1(G)$ we define the operator $\pi(w)$ as the integral 
$$
\pi(w)=\int_G w(g)\pi(g)dg
$$
with respect to the Haar measure on $G$. 

We have already defined Carnot-Caratheodory distance on the manifold $M$. Now the system  $\{\tilde{X}_1, \tilde{X}_2,  \ldots,\tilde{X}_n\}$
 generates $\mathfrak g$ and we can also define 
the optimal control distance $\tilde{d}(g,h)$ defined on $G$ corresponding to this system. The vectors $\tilde{X}_i$, $i=1, \ldots, n$ are group invariant so 
 $\tilde d(g,h)=|g^{-1} h|$, where $|g|=\tilde{d}(g,e)$. 

Note that if $\gamma (t)$ is an admissible curve on $G$ associated with vector fields
$\tilde{X}_i$, $i=1, \ldots, n$ in the sense that for a smooth function $f$ on $G$ one has
$$ \frac{d}{dt} f(\gamma (t)) =\sum_{i=1}^n \alpha_i  (t) (\tilde{X}_if)(\gamma (t))$$
then for any $x\in M$ the map $t\mapsto \pi(\gamma(t))x$ is an admissible curve on $M$ in the sense that for a smooth function $h$ on $M$:
 \begin{equation*}
 \frac{d}{dt} h(\pi(\gamma(t))x)  = 
 \sum_{i=1}^n \alpha_i  (t) (X_i h)(\pi(\gamma(t))x)
 \end{equation*}
Consequently,

\begin{equation}\label{dist}
d(x,\pi(g)x) \le |g| 
\end{equation}
for all $x\in M$ and $g\in G$. 
Similarly as before we denote by $\tilde{B}(g, r)$ the open ball corresponding to $\tilde{d}$ with centre at $g\in G$. Note that the volume 
$|\tilde{B}(g, r)|$ does not depend on $g$ and we can define $$\tilde{V}(r)=|\tilde{B}(e, r)|=|\tilde{B}(g, r)|\;,$$ where 
$e$ is the neutral element of the group $G$. It is well-known that all Lie groups of 
polynomial growth satisfy the doubling condition \eqref{doub}. In this context it means that there exists $C>0$ such that 
$$
\tilde{V}(2r) \le C\tilde{V}(r)
$$
for all $r>0$. 

Next, we consider a function $w \in L^1(G)$ and the corresponding operator $\pi(w)$. 
We say that 
\begin{equation}\label{Newsupport2}
\text{supp}\, \pi(w) \subset   \{ (x,y) \in M^2 \colon \,{d}(x,y) \le R\}.
\end{equation}
if for every open 
 $U_i \subset M$, \ $f_i\in L^2(U_i,d\mu)$, $%
i=1,2 $, where $R=d(U_1,U_2)$ it holds that 
$$
\langle \pi(w) f_1,f_2 \rangle = 0.
$$
Using similar approach we say that $\pi(w) \ge 0$ if  
\begin{equation}\label{pos}
\langle \pi(w) f_1,f_2 \rangle \ge 0
\end{equation}
for all  $f_i\in L^2(M,d\mu)$ such that   $f_i(x) \ge 0$ for all $x\in M$ and 
 $i=1,2 $. Note that if the operator $\pi(w)$ has an $L^\infty$ kernel, then  
 condition \eqref{pos} means simply that $\pi(w)(x,y) \ge 0$ almost everywhere, while  
 \eqref{Newsupport2} expresses that the kernel $\pi(w)(x,y)$ is supported in the subset 
 $ \{ (x,y) \in M^2 \colon \,{d}(x,y) \le R\}$. The definitions \eqref{Newsupport2}
 and \eqref{pos} allow us to avoid the discussion of the existence and nature of the kernel of the operator $\pi(w)$.

\begin{lema}\label{l3.1}
Assume that $w \in L^1(G)$ and  that 
$\text{\em supp}\, w \subset \tilde{B}(e, R)$.
The $\pi(w)$ satisfies condition \eqref{Newsupport2}.
\end{lema}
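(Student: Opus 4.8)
The plan is to unwind the definition $\pi(w)=\int_G w(g)\,\pi(g)\,dg$ and to combine the estimate \eqref{dist} with the fact that each $\pi(g)$ is an $L^2$-isometry: only group elements $g$ with $|g|$ large can contribute a nonzero off-diagonal matrix coefficient $\langle\pi(g)f_1,f_2\rangle$, and those are exactly the elements killed by the support hypothesis on $w$.

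First I would fix open sets $U_1,U_2\subset M$ and functions $f_i\in L^2(U_i,d\mu)$, extended by zero to all of $M$, and put $R=d(U_1,U_2)$. Since $g\mapsto w(g)\pi(g)f_1$ has $L^2$-norm bounded by $|w(g)|\,\|f_1\|_2\in L^1(G)$ — the operators $\pi(g)$ being isometries on $L^2(M)$ by \eqref{iso} — this map is Bochner integrable, so $\pi(w)f_1=\int_G w(g)\pi(g)f_1\,dg$ in $L^2(M)$ and hence
$$
\langle\pi(w)f_1,f_2\rangle=\int_G w(g)\,\langle\pi(g)f_1,f_2\rangle\,dg .
$$

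Next I would analyse the matrix coefficient. By \eqref{pig}, $\langle\pi(g)f_1,f_2\rangle=\int_M f_1(\pi(g)x)\,\overline{f_2(x)}\,d\mu(x)$, and the integrand vanishes unless $x\in U_2$ and $\pi(g)x\in U_1$. For such an $x$ one has $R=d(U_1,U_2)\le d(x,\pi(g)x)\le|g|$ by \eqref{dist}, i.e.\ $|g|\ge R$. On the other hand $\operatorname{supp}w\subset\tilde B(e,R)$ means $w(g)=0$ for a.e.\ $g$ with $|g|\ge R$; so for a.e.\ $g$ in the support of $w$ there is no $x\in U_2$ with $\pi(g)x\in U_1$ — indeed $\{x:\pi(g)x\in U_1\}=\pi(g^{-1})(U_1)$ is open, since $\pi(g)$ is a diffeomorphism of $M$, and its intersection with $U_2$ is empty — whence $f_1(\pi(g)x)\overline{f_2(x)}\equiv 0$ on $M$ and $\langle\pi(g)f_1,f_2\rangle=0$. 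Substituting this into the displayed identity gives $\langle\pi(w)f_1,f_2\rangle=0$, which is exactly condition \eqref{Newsupport2}.

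I do not anticipate a genuine obstacle: once \eqref{dist} is available the argument is essentially a one-line support computation. The only points that need a little care are the justification of passing the $G$-integral through the inner product (handled by $w\in L^1(G)$ and the isometry property) and the observation that, for $g$ in the support of $w$, the offending subset of $M$ is genuinely empty rather than merely $\mu$-null, which uses that $\pi(g)$ is a homeomorphism of $M$ and that $R$ is precisely the distance between the \emph{open} sets $U_1$ and $U_2$.
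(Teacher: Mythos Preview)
Your argument is correct and follows essentially the same route as the paper: the paper's proof simply observes that \eqref{dist} forces $\langle \pi(g) f_1, f_2\rangle = 0$ whenever $|g| < R = d(U_1,U_2)$, and then integrates against $w$ supported in $\tilde B(e,R)$. You have reproduced exactly this mechanism, only supplying the additional care (Bochner integrability, the emptiness-versus-null-set remark) that the paper leaves implicit.
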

\begin{proof}
Note that it follows form \eqref{dist}  that if $|g| <R$, where $R=d(U_1,U_2)$
 and $U_i \subset M$, \ $f_i\in L^2(U_i,d\mu)$, $i=1,2 $,
 then 
 $$
 \langle \pi(g) f_1,f_2 \rangle = 0.
 $$
The lemma is a straightforward consequence of above observation. 
\end{proof} 

Next, we denote by  $\mathbbm{1}_M  $ the function which is identically equal $1$ on 
$M$ that is $\mathbbm{1}_M (x)=1 $ for all $x\in M$. 
\begin{lema}\label{l3.2}
	Suppose that   $w \in L^1(G)$ and that $\pi(w)$ is an operator defined above. Then 
  $$
  \pi(w) \mathbbm{1}_M  = \mathbbm{1}_M  \int_G w(g)dg.
  $$
  Moreover, for any function $w\in  L^1(G)$ if  $ w(g) \ge 0 $ for all $g\in G$ then
\begin{equation*}%\label{Newmonoton}
  \pi(w) \ge 0
\end{equation*}
that is $\pi(W)$ satisfies condition \eqref{pos}.
\end{lema}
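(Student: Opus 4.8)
The plan is to reduce both assertions to the pointwise action formula \eqref{pig} together with Fubini's theorem. I would begin from the elementary observation that $\pi(g)\mathbbm{1}_M = \mathbbm{1}_M$ for every $g\in G$: indeed, by \eqref{pig}, $\pi(g)\mathbbm{1}_M(x) = \mathbbm{1}_M(\pi(g)x) = 1$ for all $x\in M$. Since $\mathbbm{1}_M$ need not belong to $L^2(M)$ when $\mu(M)=\infty$, I would read the first identity weakly, pairing against an arbitrary $\phi\in L^1(M)$ (equivalently, against $\phi\in C_c(M)$). The bound $|w(g)\langle\pi(g)\mathbbm{1}_M,\phi\rangle| \le |w(g)|\,\|\phi\|_1$ together with $w\in L^1(G)$ lets us interchange the integral over $G$ with the pairing on $M$, so that
\[
\langle\pi(w)\mathbbm{1}_M,\phi\rangle = \int_G w(g)\,\langle\pi(g)\mathbbm{1}_M,\phi\rangle\,dg = \Big(\int_G w(g)\,dg\Big)\int_M\overline{\phi}\,d\mu = \Big\langle\mathbbm{1}_M\int_G w(g)\,dg,\ \phi\Big\rangle .
\]
As $\phi$ is arbitrary, this yields $\pi(w)\mathbbm{1}_M = \mathbbm{1}_M\int_G w(g)\,dg$.

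For the positivity statement I would use that each $\pi(g)$ is positivity preserving: since $\pi(g)$ acts by the point transformation $x\mapsto\pi(g)x$ and, by the argument following \eqref{iso}, is an isometry on $L^1(M)$, that transformation preserves $\mu$; hence if $f\ge 0$ $\mu$-a.e.\ then $\pi(g)f = f\circ\pi(g)\ge 0$ $\mu$-a.e. Consequently, for $f_1,f_2\in L^2(M)$ with $f_1,f_2\ge 0$ and every fixed $g\in G$,
\[
\langle\pi(g)f_1,f_2\rangle = \int_M f_1(\pi(g)x)\,\overline{f_2(x)}\,d\mu(x) \ge 0 .
\]
Because $f_1,f_2\in L^2(M)$, the function $g\mapsto\langle\pi(g)f_1,f_2\rangle$ is bounded, so $g\mapsto w(g)\langle\pi(g)f_1,f_2\rangle$ is integrable over $G$, and another application of Fubini gives
\[
\langle\pi(w)f_1,f_2\rangle = \int_G w(g)\,\langle\pi(g)f_1,f_2\rangle\,dg \ge 0
\]
whenever $w\ge 0$, since the integrand is then nonnegative. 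By the definition \eqref{pos} this is precisely $\pi(w)\ge 0$.

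The proof is essentially bookkeeping and I do not anticipate a genuine obstacle. The only point that needs a little care is the meaning of $\pi(w)\mathbbm{1}_M$ when $M$ has infinite measure, which is why I phrase the first identity in the tested form above rather than as an $L^2$ identity; one could alternatively note that $\pi(w)$ extends to a bounded operator on $L^\infty(M)$ of norm at most $\|w\|_{L^1(G)}$ and argue there. All the interchanges of integration are standard applications of Fubini's theorem, justified because the operators $\pi(g)$ are uniformly bounded (they are isometries on every $L^p(M)$, $1\le p\le\infty$) and $w$ is integrable on $G$.
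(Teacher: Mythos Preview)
Your argument is correct and follows essentially the same route as the paper: both parts reduce to the identities $\pi(g)\mathbbm{1}_M=\mathbbm{1}_M$ and $\langle\pi(g)f_1,f_2\rangle\ge 0$ for nonnegative $f_1,f_2$, then integrate in $g$. Your version is in fact more careful than the paper's, which computes $\pi(w)\mathbbm{1}_M$ pointwise without addressing the $\mu(M)=\infty$ issue you handle via the weak pairing.
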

\begin{proof}
It follows from  \eqref{pig} and the definition of $\pi(w)$
that 
$$
 \pi(w) \mathbbm{1}_M(x)=\int_G w(g)  \mathbbm{1}_M(\pi(g)x) dg=
 \int_G w(g)dg.
$$
 Next note that if $f_i(x) \ge 0$ for all $x\in M$ and 
$i=1,2 $,
then 
$$
\langle \pi(g) f_1,f_2 \rangle \ge  0.
$$
Hence  if $w(g) \ge 0$ for all $g\in G$ then 
\begin{eqnarray*}
 \langle \pi(w) f_1,f_2 \rangle
 =\int_G w(g) \langle \pi(g) f_1,f_2 \rangle \ge 0 
 \end{eqnarray*} 
as required. 
\end{proof}

Next, we define an operator $\tilde{L}$ on the group $G$ by the formula 
$$
 \tilde{L}=-\sum_{i=1}^n \tilde{X}_i^2.
$$ 
It is well-known, see for example \cite[Theorem 2.2, p. 22]{GS} that the operator $\tilde{L}$ generates  semigroup acting on all spaces  $L^p(G)$ and that corresponding convolution heat kernel
$\tilde{h}_t(g,h)=\tilde{h}_t(gh^{-1})$  satisfies the two-sided  Gaussian estimates
\eqref{gaussian11} which can be stated in this setting as 
\begin{equation}\label{du}
\frac{c}{\tilde{V}( \sqrt{t})}e^{-\beta'|g|^2\slash t} \le \tilde{h}_t(g) \leq \frac{C}{\tilde{V}( \sqrt{t})}e^{-\beta|g|^2\slash t}.
\end{equation}

%It is also well-known ( Yves Guivarc'h?)  that there exists two parameters $d_1$ and $d_2$ called local and global dimension of $G$ such that 
%\comment{Chyba w koncu nie uzywamy tych oszacowan}
%
%
%\begin{equation*}%\label{quuu} 
%|\tilde{B}(e, \sqrt{t})| \sim \left\{ \begin{array}{ll}
%     t^{d_1} & \mbox{ if $t \le 1$}\\
%     t^{d_2}   & \mbox{ if $t>1$}.
%           \end{array}
%    \right.
%\end{equation*}

Consider next the Poisson semigroup corresponding to the operator $\widetilde L$ that is  $ \{\exp(-t {\widetilde L}^{1/2})\}_{t \ge 0}$.
By $\tilde{p}_t(g)$ we denote the convolution  kernel corresponding to the Poisson semigroup. By the subordinate formula $\tilde{p}_t(g)$ can be expressed by the following integral  involving the heat kernel $\tilde{h}_t$. 
\begin{equation}\label{sub}
\tilde{p}_t(g)=\pi^{-1/2}\int_0^\infty e^{-s} \tilde{h}_{t^2/(4s)}(g) \frac{ds}{s}
\end{equation}

In our discussion we need some properties of the kernel $\tilde{p}_t$
which we describe in the following lemma. 

\begin{lema}\label{lem1}
Consider  simply connected Lie group $G$ with a polynomial growth, the operator $\widetilde L$ define above and let $\tilde{p}_t(g)$ be the kernel 
corresponding to the Poisson semigroup. Then for all $t>0$
\begin{equation}\label{pu}
\frac{c}{\tilde{V}(t+|g|)}\frac{t}{t+|g|} \le \tilde{p}_t(g) \leq \frac{C}{\tilde{V}(t+|g|)}\frac{t}{t+|g|}.
\end{equation}
In consequence there exists  a positive  constant $ C>0 $ such that 
\begin{equation}\label{NewR1part1}
C^{-1}\tilde{p}_t(g) \le \tilde{p}_{2t}(g) \leq C\tilde{p}_t(g).
\end{equation}
for all $t>0$ and $g\in G$. 
\end{lema}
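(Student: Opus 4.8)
The plan is to derive the two-sided Poisson bounds \eqref{pu} from the subordination formula \eqref{sub} together with the two-sided Gaussian heat kernel bounds \eqref{du} on the polynomial-growth group $G$, and then deduce the doubling-type estimate \eqref{NewR1part1} as a formal consequence of \eqref{pu}. For the upper bound, I would insert the right-hand inequality of \eqref{du} into \eqref{sub}, obtaining
$$
\tilde p_t(g)\le C\,\pi^{-1/2}\int_0^\infty e^{-s}\,\frac{1}{\tilde V(\sqrt{t^2/(4s)})}\,e^{-4\beta s|g|^2/t^2}\,\frac{ds}{s}.
$$
Using $\tilde V(\lambda r)\le C\lambda^\nu \tilde V(r)$ for $\lambda\ge 1$ (and a similar lower control for $\lambda\le 1$, again valid on polynomial-growth groups), one compares $\tilde V(\sqrt{t^2/(4s)})$ with $\tilde V(t+|g|)$ up to a polynomial factor in $\sqrt{s}\,(t+|g|)/t$, and the Gaussian factor $e^{-4\beta s|g|^2/t^2}$ together with $e^{-s}$ absorbs that polynomial; a routine splitting of the $s$-integral at $s\sim 1$ and at $s\sim t^2/|g|^2$ then produces the claimed factor $\frac{t}{t+|g|}$. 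This is essentially the standard computation showing that subordinating a Gaussian semigroup on a doubling space yields a Poisson-type kernel of the stated shape, see e.g. the treatment in \cite{DOS} or \cite{GS}.

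For the lower bound I would proceed symmetrically using the left-hand inequality of \eqref{du}, but now I only need to retain a single well-chosen range of the variable $s$ in \eqref{sub} rather than estimate the whole integral: choosing $s$ comparable to $\max(1,\, t^2/|g|^2)$ makes both $e^{-s}$ and $e^{-\beta'|g|^2\cdot 4s/t^2}$ bounded below by a constant, while on that range $\sqrt{t^2/(4s)}\sim \min(t,|g|)$, so $\tilde V(\sqrt{t^2/(4s)})$ is comparable to $\tilde V(t+|g|)$ up to the polynomial-growth factor, and the length of the $s$-interval contributes the missing power of $t/(t+|g|)$. Putting the two halves together gives \eqref{pu}.

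Finally, \eqref{NewR1part1} is immediate from \eqref{pu}: replacing $t$ by $2t$ changes $\tilde V(t+|g|)$ by at most a bounded factor, since $t+|g|\le 2t+|g|\le 2(t+|g|)$ and $\tilde V$ is doubling, and it changes $\frac{t}{t+|g|}$ by at most a factor of $2$ in each direction; hence $\tilde p_{2t}(g)$ and $\tilde p_t(g)$ are comparable uniformly in $t>0$ and $g\in G$. The main obstacle, such as it is, is purely bookkeeping: carefully tracking how $\tilde V(\sqrt{t^2/(4s)})$ relates to $\tilde V(t+|g|)$ across the two regimes $|g|\lesssim t$ and $|g|\gtrsim t$ so that the polynomial-growth factors are genuinely swallowed by the exponential weights and the resulting $s$-integrals converge to quantities of order $\frac{t}{t+|g|}$; no genuinely new idea is needed beyond \eqref{du} and the doubling property of $\tilde V$.
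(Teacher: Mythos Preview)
Your proposal matches the paper's proof exactly in approach: the subordination formula \eqref{sub} together with the two-sided Gaussian bounds \eqref{du} yield \eqref{pu}, and then \eqref{NewR1part1} follows immediately from \eqref{pu} and the doubling property of $\tilde V$; the paper does not spell out the computation you outline but simply refers to \cite[Proposition~6]{DP} for it. One small slip in your sketch of the lower bound: you want $s$ comparable to $\min(1,\,t^2/|g|^2)$ rather than $\max$, so that $e^{-s}$ stays bounded below (take $s\sim 1$ when $|g|\le t$ and $s\sim t^2/|g|^2$ when $|g|>t$); with that correction the argument goes through as you describe.
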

\begin{proof}
The upper and lower bounds for the kernel of Poisson semigroup \eqref{pu} are straightforward consequence of  the  subordination formula 
\eqref{sub} and the Li-Yau estimate \eqref{du}. We refer reader to \cite[Proposition 6]{DP} for details. 
Estimate \eqref{NewR1part1} is a straightforward consequence of the doubling condition and \eqref{pu}.
\end{proof}

Next let us denote the characteristic function of the ball $\tilde{B}(e,t)$
by $\chi_{\tilde{B}(e,t)}$. 
We complement Lemma \ref{lem1} by the 
 following standard observations. 
\begin{lema}\label{lem3}
Under the same assumption as in Lemma \ref{lem1} one has  
\begin{equation}\label{NewR6}
\frac{\chi_{\tilde{B}(e,t)}(g)}{{\tilde{V}}(t)}\leq C  \tilde{p}_t(g)
\end{equation}
and
\begin{equation}\label{R7}
|\tilde{X}_i  \tilde{p}_t(g)| \le Ct^{-1}  \tilde{p}_t(g)
\end{equation}
for all $t>0$ and $g\in G$. 
\end{lema}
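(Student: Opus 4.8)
The two estimates are classical facts about the Poisson kernel on a Lie group of polynomial growth, and both follow from Lemma \ref{lem1} together with the known Gaussian bounds \eqref{du} and the doubling property. For the first inequality \eqref{NewR6}, I would argue as follows. Fix $t>0$ and $g\in G$ with $|g|\le t$, so $\chi_{\tilde B(e,t)}(g)=1$; then $t+|g|\le 2t$ and $\tfrac{t}{t+|g|}\ge\tfrac12$, so the lower bound in \eqref{pu} gives
\begin{equation*}
\tilde p_t(g)\ge\frac{c}{\tilde V(t+|g|)}\cdot\frac12\ge\frac{c}{2\,\tilde V(2t)}\ge\frac{c'}{\tilde V(t)},
\end{equation*}
where in the last step I used the doubling condition $\tilde V(2t)\le C\tilde V(t)$. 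When $|g|>t$ the left side of \eqref{NewR6} is zero and there is nothing to prove. This gives \eqref{NewR6} with a suitable constant.

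For the gradient estimate \eqref{R7}, the natural route is to differentiate the subordination formula \eqref{sub}: since $\tilde X_i$ acts only in the $g$ variable,
\begin{equation*}
\tilde X_i\tilde p_t(g)=\pi^{-1/2}\int_0^\infty e^{-s}\,\bigl(\tilde X_i\tilde h_{t^2/(4s)}\bigr)(g)\,\frac{ds}{s},
\end{equation*}
and then invoke the standard Gaussian-type bound for the space derivative of the heat kernel on a group of polynomial growth, namely $|\tilde X_i\tilde h_u(g)|\le C u^{-1/2}\tilde V(\sqrt u)^{-1}e^{-\beta|g|^2/u}$ (this is part of the Li--Yau package, e.g.\ from \cite{GS}; it can also be obtained from \eqref{du} by the usual Cauchy-integral / semigroup argument $\tilde X_i e^{-u\tilde L}=\tilde X_i e^{-\frac u2\tilde L}\cdot e^{-\frac u2\tilde L}$ combined with $\|\tilde X_i e^{-v\tilde L}\|_{L^2\to L^2}\le Cv^{-1/2}$). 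Substituting $u=t^2/(4s)$ gives $u^{-1/2}=2\sqrt s/t$, so
\begin{equation*}
|\tilde X_i\tilde p_t(g)|\le\frac{C}{t}\,\pi^{-1/2}\int_0^\infty e^{-s}\,\frac{2\sqrt s}{\tilde V(t/(2\sqrt s))}\,e^{-4\beta s|g|^2/t^2}\,\frac{ds}{s},
\end{equation*}
and the remaining integral is, up to the harmless factor $\sqrt s$, exactly the integral that produces the upper bound $\tilde p_t(g)$ in \eqref{pu}; absorbing $\sqrt s\,e^{-s/2}\le C e^{-s/4}$ and redoing the estimate of \cite[Proposition 6]{DP} (or simply comparing integrands and using doubling in the $\sqrt s$ scaling of $\tilde V$) yields $|\tilde X_i\tilde p_t(g)|\le Ct^{-1}\tilde p_t(g)$.

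The only genuinely non-routine point is the heat-kernel derivative bound $|\tilde X_i\tilde h_u|\lesssim u^{-1/2}\tilde V(\sqrt u)^{-1}e^{-\beta|g|^2/u}$, which I would quote from the literature on groups of polynomial growth rather than reprove; everything else is bookkeeping with the doubling inequality and the elementary estimate $\sqrt s\,e^{-\beta' s}\lesssim e^{-\beta'' s}$ inside the subordination integral. I would present \eqref{NewR6} in full (it is two lines) and for \eqref{R7} indicate the differentiation of \eqref{sub}, cite the heat-kernel gradient bound, and note that the resulting integral is dominated by the one already estimated in the proof of Lemma \ref{lem1}, so that the bound $t^{-1}\tilde p_t(g)$ follows by the same computation.
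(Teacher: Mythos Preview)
Your proposal is correct and follows essentially the same route as the paper: \eqref{NewR6} is deduced from the lower bound in \eqref{pu} together with doubling, and \eqref{R7} is obtained by differentiating the subordination formula \eqref{sub}, invoking the Gaussian gradient bound for $\tilde X_i\tilde h_u$ on groups of polynomial growth, and repeating the computation of \cite[Proposition~6]{DP}. The only cosmetic difference is that the paper cites the gradient estimate from Saloff-Coste \cite[Proposition~1]{Sa1} rather than \cite{GS} or a semigroup argument.
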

\begin{proof}
Estimate \eqref{NewR6} is a straightforward consequence of Lemma \ref{pu}.
To verify estimates \eqref{R7} recall that following Gaussian estimate for the gradient of 
the heat kernel on Lie groups of polynomial growth were obtain by  Saloff-Coste
$$
\big|\tilde{X}_i\tilde{h}_t(g)\big| \leq \frac{Ct^{-1/2}}{\tilde{V}( \sqrt{t})}e^{-\beta|g|^2\slash t},
$$
see  \cite[Proposition 1]{Sa1}. Now \eqref{R7} can be obtained using the subordinate formula \eqref{sub}
and essentially the same calculations as in the proof of \cite[Proposition 6]{DP}. 
\end{proof}

\section{Proof of Theorem \ref{gaussian1}}
Set 
$$
h_t=\pi(\tilde{h}_t) \quad \mbox{and} \quad p_t= \pi(\tilde{p}_t).
$$
It follows from the standard representation theory argument that $h_t$ is
the heat semigroup generated by $-L$ and $p_t$ constitutes the Poisson semigroup
generated by  $-L^{1/2}$. It follows from celebrated results obtained by H\"ormander 
in \cite{Ho} that the operators $\exp(-tL^{1/2})$ and $\exp(-tL)$ have smooth kernels 
$p_t(x,y)$ and $h_t(x,y)$ for all $t>0$ and $(x,y)\in M^2$.

Note  also that 

  $$  X_ih_t = \pi(\tilde{X}_i\tilde{h}_t)   \quad \mbox{and} \quad  X_ip_t = \pi(\tilde{X}_i\tilde{p}_t).    $$
In what follow we will need the following consequence of Lemmata \ref{lem1} and \ref{lem3}.
\begin{coro}\label{po} Let $p_t$ be the kernel corresponding to the Poisson semigroup $\exp(-tL^{1/2})$ defined above. The there exist a
	constant $C>0$  such that  
\begin{equation}\label{NR}
C^{-1}{p}_t(x,y) \le p_{2t}(x,y) \leq C{p}_t(x,y)
\end{equation}
and 
\begin{equation}\label{NewG1}
| X_i{p}_t(x,y)|\leq C t^{-1} {p}_t(x,y).
\end{equation}
\end{coro}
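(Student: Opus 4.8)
The plan is to transfer the pointwise estimates on the group $G$ from Lemmata~\ref{lem1} and~\ref{lem3} to pointwise estimates for the kernels on $M$ by exploiting the positivity‑preserving property of the representation $\pi$ recorded in Lemma~\ref{l3.2}, together with the smoothness of $p_t(x,y)$ and $X_ip_t(x,y)$ furnished by \cite{Ho}.

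The key elementary observation is the following comparison principle: if $w_1,w_2\in L^1(G)$ satisfy $w_1(g)\le w_2(g)$ for all $g\in G$, then $w_2-w_1\ge 0$ on $G$, so Lemma~\ref{l3.2} gives $\pi(w_2)-\pi(w_1)=\pi(w_2-w_1)\ge 0$ in the sense of \eqref{pos}. When the operators $\pi(w_1)$ and $\pi(w_2)$ have continuous kernels, condition \eqref{pos} unwinds to $\iint_{M^2}\big(\pi(w_2)-\pi(w_1)\big)(x,y)\,f_1(y)f_2(x)\,d\mu(y)\,d\mu(x)\ge 0$ for all non‑negative $f_1,f_2$, and continuity promotes this to the pointwise bound $\pi(w_1)(x,y)\le \pi(w_2)(x,y)$ on $M^2$. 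In our situation all the relevant operators have smooth kernels by \cite{Ho}, and moreover $p_t(x,y)\ge 0$ everywhere, since $\tilde p_t\ge 0$ on $G$ and hence $p_t=\pi(\tilde p_t)\ge 0$ by Lemma~\ref{l3.2} (so that the two‑sided inequalities in \eqref{NR} and \eqref{NewG1} are meaningful even where $p_t$ vanishes).

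Granting this, \eqref{NR} follows immediately: apply the comparison principle first to $w_1=C^{-1}\tilde p_t$, $w_2=\tilde p_{2t}$ and then to $w_1=\tilde p_{2t}$, $w_2=C\tilde p_t$, where $C$ is the constant in \eqref{NewR1part1}, and use $p_t=\pi(\tilde p_t)$, $p_{2t}=\pi(\tilde p_{2t})$. For \eqref{NewG1}, recall that $X_ip_t=\pi(\tilde X_i\tilde p_t)$, that $\tilde X_i\tilde p_t\in L^1(G)$ by \eqref{R7} combined with $\tilde p_t\in L^1(G)$, and that $\pi(\tilde X_i\tilde p_t)$ has the smooth kernel $X_ip_t(x,y)$. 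Estimate \eqref{R7} reads $-Ct^{-1}\tilde p_t\le \tilde X_i\tilde p_t\le Ct^{-1}\tilde p_t$ on $G$; applying the comparison principle to the pairs $(\tilde X_i\tilde p_t,\,Ct^{-1}\tilde p_t)$ and $(-Ct^{-1}\tilde p_t,\,\tilde X_i\tilde p_t)$ yields $-Ct^{-1}p_t(x,y)\le X_ip_t(x,y)\le Ct^{-1}p_t(x,y)$, which is \eqref{NewG1}.

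The only genuinely delicate point is the passage from the operator inequality \eqref{pos} to the pointwise inequality between kernels, which is where the smoothness supplied by H\"ormander's theorem is essential; everything else is a direct transcription of the corresponding statements on $G$. I do not expect any serious obstacle beyond keeping careful track of this continuity argument and of the membership $\tilde X_i\tilde p_t\in L^1(G)$.
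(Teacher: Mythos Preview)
Your proposal is correct and follows essentially the same approach as the paper: transfer the group estimates \eqref{NewR1part1} and \eqref{R7} to $M$ via the positivity preservation of $\pi$ in Lemma~\ref{l3.2}, and use the smoothness of the kernels from H\"ormander's theorem to interpret \eqref{pos} pointwise. The paper's proof is simply a terse three-sentence version of exactly what you have spelled out.
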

\begin{proof}
As we noted above the operator $p_t$ has a smooth kernel so \eqref{pos} can be interpreted pointwise. Hence \eqref{NR}
 is a straightforward consequence of Lemmata  \ref{l3.2} and  \ref{lem1}. 
 By similar argument \eqref{NewG1} follows from Lemma \ref{l3.2} and estimate 
 \eqref{R7}.
\end{proof}

 The following lemma will be  crucial in our  further considerations.
 \begin{lema}\label{Newlemat1}
 	 Let $p_t$ be the kernel of the Poisson semigroup as in Lemma \ref{po}. 
 	 There exist constants $C,c>0$ such that
  $${p}_t(x,y) \leq C {p}_t(x',y) \exp\Big(c \frac{{d}(x,x')}{t}\Big)$$
  for all $x,x',y \in M$. 
 \end{lema}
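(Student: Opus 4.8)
The plan is to reduce the desired pointwise estimate on $M$ to the analogous (and easier) estimate for the Poisson kernel $\tilde p_t$ on the group $G$, and then exploit that $d(x,x')$ is controlled by the group metric. The key observation is that it suffices to prove, on the group side, an inequality of the form
\begin{equation*}
\tilde p_t(g) \le C\,\tilde p_t(hg)\,\exp\!\Big(c\,\frac{|h|}{t}\Big)
\end{equation*}
for all $g,h \in G$ and $t>0$: once this is established, intertwining with $\pi$ and using $\pi(g)\ge 0$ (Lemma \ref{l3.2}) together with the fact that $\pi$ carries left translation on $G$ to the $M$-action gives ${p}_t(x,y)\le C\,{p}_t(\pi(h)^{-1}x',y)\exp(c|h|/t)$, and then taking the infimum over all $h$ with $\pi(h)x = x'$ — combined with $d(x,\pi(h)x)\le |h|$ from \eqref{dist} — yields the stated bound. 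Actually the cleanest route is: fix $x,x'$, choose an admissible curve on $M$ from $x$ to $x'$ of length close to $d(x,x')$, lift its controls to an admissible curve $\gamma$ on $G$ from $e$ with $|\gamma(1)|$ close to $d(x,x')$, and set $h=\gamma(1)$; then $\pi(h)x=x'$ by the correspondence of admissible curves recalled before \eqref{dist}.

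First I would prove the group inequality. By the subordination formula \eqref{sub} it would in principle follow from a corresponding Gaussian-type inequality for $\tilde h_t$, but it is more direct to use the two-sided bounds \eqref{pu} from Lemma \ref{lem1}. Writing out the ratio $\tilde p_t(g)/\tilde p_t(hg)$ using \eqref{pu}, it is bounded by
\begin{equation*}
C\,\frac{\tilde V(t+|hg|)}{\tilde V(t+|g|)}\cdot\frac{t+|hg|}{t+|g|}\cdot\frac{t+|g|}{t+|hg|}
= C\,\frac{\tilde V(t+|hg|)}{\tilde V(t+|g|)}.
\end{equation*}
Since $||hg|-|g||\le |h|$ by the triangle inequality for $\tilde d$, and since $\tilde V$ is doubling (polynomial growth, \eqref{e1.3}), we get $\tilde V(t+|hg|)\le \tilde V(t+|g|+|h|)\le C(1+|h|/(t+|g|))^\nu \tilde V(t+|g|)\le C(1+|h|/t)^\nu\tilde V(t+|g|)$. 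Finally $(1+|h|/t)^\nu\le C\exp(c|h|/t)$, which gives the claim with the exponential on the right.

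The main obstacle — really the only delicate point — is the passage from the group estimate to $M$ via the representation, namely justifying the chain
\begin{equation*}
{p}_t(x,y) = \pi(\tilde p_t)(x,y), \qquad
\pi\big(\tilde p_t\big) \le C\,\exp(c|h|/t)\,\pi\big(L_h\tilde p_t\big),\qquad
\pi\big(L_h\tilde p_t\big)(x,y) = {p}_t(\pi(h)^{-1}x, y),
\end{equation*}
where $L_h$ denotes left translation. The first and third identities are bookkeeping about how $\pi(w)$ transforms under left translation of $w$, using \eqref{pig}. The inequality in the middle is the point where positivity is used: from the scalar inequality $C\exp(c|h|/t)\,L_h\tilde p_t - \tilde p_t \ge 0$ on $G$ and the positivity statement $\pi(w)\ge 0$ whenever $w\ge 0$ (Lemma \ref{l3.2}), we get $C\exp(c|h|/t)\,\pi(L_h\tilde p_t) - \pi(\tilde p_t)\ge 0$ as operators, and since all these operators have continuous kernels (H\"ormander hypoellipticity, as noted after the definition of $h_t$, $p_t$) this operator inequality holds pointwise for the kernels. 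Having fixed $h=\gamma(1)$ with $\pi(h)x=x'$ and $|h|\le d(x,x')+\varepsilon$, we conclude ${p}_t(x,y)\le C\exp(c(d(x,x')+\varepsilon)/t)\,{p}_t(x',y)$, and letting $\varepsilon\to 0$ finishes the proof.
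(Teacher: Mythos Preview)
Your argument is correct (up to a couple of harmless slips noted below), but it is a genuinely different route from the paper's.

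\medskip
\textbf{What the paper does.} The paper never goes back to $G$ for this lemma. It uses the gradient bound \eqref{NewG1} from Corollary~\ref{po}, which has already been established on $M$ itself: along an admissible unit-speed curve $\gamma:[0,S]\to M$ from $x$ to $x'$ one sets $f(s)=p_t(\gamma(s),y)$, so \eqref{NewG1} gives $|f'(s)|\le Ct^{-1}f(s)$, and Gronwall yields $p_t(x',y)\ge e^{-CS/t}p_t(x,y)$. Taking the infimum over admissible curves finishes the proof in three lines.

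\medskip
\textbf{What you do.} You prove a Harnack inequality for $\tilde p_t$ on $G$ directly from the two-sided bounds \eqref{pu} and doubling, and then push it down to $M$ via positivity of $\pi$ (Lemma~\ref{l3.2}) together with a lifting of admissible curves from $M$ to $G$. This bypasses \eqref{NewG1} entirely and makes explicit that the Harnack inequality on $M$ is the image of the one on $G$. The lifting step (choosing $h\in G$ with $\pi(h)x=x'$ and $|h|\le d(x,x')+\varepsilon$) is not stated in the paper, but your justification via transporting the controls $\alpha_i$ to $G$ and invoking uniqueness for the Carath\'eodory ODE is correct. The trade-off: the paper's argument is shorter because it has already paid for \eqref{NewG1}; yours is more structural and would survive even if one only had \eqref{pu} without the gradient bound.

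\medskip
\textbf{Minor slips.} In your ratio computation the extra factor $\frac{t+|g|}{t+|hg|}$ should not be there; the two-sided bound gives
\[
\frac{\tilde p_t(g)}{\tilde p_t(hg)}\le C\,\frac{\tilde V(t+|hg|)}{\tilde V(t+|g|)}\cdot\frac{t+|hg|}{t+|g|},
\]
but the surviving linear factor is still $\le 1+|h|/t\le e^{|h|/t}$, so nothing changes. Also, $\pi(L_h\tilde p_t)=\pi(h)\pi(\tilde p_t)$ has kernel $p_t(\pi(h)x,y)$, not $p_t(\pi(h)^{-1}x,y)$; with your choice $\pi(h)x=x'$ this is exactly $p_t(x',y)$, so the conclusion is unaffected.
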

\begin{proof}
Consider  an admissible curve connecting $x$ and $x'$, $\gamma \colon [0, S] \to M$ parametrised with unit velocity. Set 
$$
f(s)={p}_t(\gamma(s),y).
$$ 
By \eqref{NewG1}
$$
|f'(s)| \le Ct^{-1} f(s). 
$$
By the standard differential inequality argument  $f(S) \le f(0)\exp(cS/t)$. Taking minimum over  $S$ for all admissible curves yields the lemma. 
\end{proof}

%\begin{teo}
%   The Manifold $M$ equiped with the metric  ${d}$ is the space of homogeneous type in the sense of Coifman-Weiss, that is, the  doubling condition  \eqref{doub} holds. 
%\end{teo}

%\begin{proof}
{\em Proof of the doubling condition \eqref{doub}.}
For any  $r>0$ set
$$
q_r(x,y)=\frac{\pi\left({\chi_{\tilde{B}(e,r)}}\right)(x,y)}{\tilde{V}(r)}
$$
Lemma \ref{l3.2} and estimate \eqref{NewR6} imply
 $$0 \le q_r(x,y) \leq  C p_r(x,y).$$
 Moreover, by Lemma \ref{l3.1}
  $$ \text{supp}\, q_r(x,y)\subset B(y, r).$$
 Fix   $y \in M$ and for any  $r>0$ set 
 \begin{equation*}
 \label{m_r} m_r=\sup_{x} q_r(x,y)
 \end{equation*}
 and
  \begin{equation*}\label{M_r} M_r=\sup_{x\in B(y, r)} p_r(x,y).
  \end{equation*}
  Obviously,  $m_r\leq   C M_r$. Note also that 
  \begin{equation*}
   m_r\geq {V}(y, r)^{-1}
   \end{equation*}
 for all $x\in M$ and $R>0$. Indeed, by Lemma \ref{l3.2}
  $$  1=\int_{M} q_r(x,y)\, dx\leq m_r{V}(y, r).$$
By  Lemma \ref{Newlemat1} 
  $$ p_r(x,y) \leq Ce^{2c}p_r(x',y) \ \ \text{for all }  x, x'\in B(y, r).$$
  Hence
  $$
   e^{-2c}M_r \le C p_r(x',y)  \ \text{for all }  x, x'\in B(y, r).
  $$
  Consequently,
  \begin{equation}\begin{split}
   e^{-2c} M_r V(y, r) & \leq C\int_{\mathbb M} p_r(x',y)\, dx'\\
   & =C\int_G
   \tilde{p}_r(g) \, dg\\
  &=C.
  \end{split}\end{equation}
 Thus we have proved,
 \begin{equation}\label{NewMaxAndBalls}
  {V}(y, r)^{-1}\leq m_r\leq M_r\leq C{V}(y, r)^{-1}.
  \end{equation}

Next, by   point \eqref{NR} of Corollary \ref{po} there exists constant 
  $C$ independent of~$R$, such that
$$C_{-1}p_r(x,y)\leq p_{2r}(x,y)\leq Cp_r(x,y).$$
Hence 
\begin{equation}\label{Newcompare}
M_r\sim M_{2r}.
\end{equation}
Applying  \eqref{NewMaxAndBalls}  for  $M_r$ and $M_{2r}$ combined with \eqref{Newcompare} 
yields the doubling condition. 
%\end{proof}

{\em Proof of the two-sided Gaussian estimates \eqref{gaussian11}
and boundedness of the Riesz transform.}
The rest of the proof goes along standard lines.  Note that  
$$
p_t(x,x) \le M_t \le C{V}(x, t)^{-1}.
$$
Hence 
$$
\int |p_t(x,y)|^2 dy = p_{2t}(x,x) \le C{V}(x, t)^{-1}.
$$
Now if we set 
$$ M_{V_t}f(x)=V(x,t)f(x),$$
then we can equivalently state the above estimates as
$$
 \|M_{V_t}\exp(-tL^{1/2})\|_{2\to \infty} \le C
$$
for all $t>0$. It follows that 
\begin{eqnarray}\label{gg} \nonumber
	\|M_{V_t}\exp(-t^2L)\|_{2\to \infty} \le
		\|M_{V_t}\exp(-tL^{1/2})\|_{2\to \infty}\\
		\times	\|\exp(-t^2L)\exp(tL^{1/2})\|_{2\to 2} \le C.
\end{eqnarray}
It is well-known that estimate \eqref{gg} implies the upper Gaussian estimates, see \cite{BCS} and \cite{Si} for some examples of many proofs
of this implication available in the literature.  
Similarly we note that 
\begin{eqnarray}\label{ggg} \nonumber
\|M_{V_t}X_i\exp(-t^2L)\|_{2\to \infty} \le
C\|M_{V_t}X_i\exp(-tL^{1/2})\|_{2\to \infty} \le C/t.
\end{eqnarray}
It was shown in \cite[Thoerem 1.1 and Corollary 2.2]{CS} 
that \eqref{ggg} implies the two-sided Gaussian estimates \eqref{gaussian11} and the boundedness of Riesz transform for all 
$1<p<\infty$. The boundedness of Riesz transform can be alternatively verified using transference techniques from \cite{CW-transference}.

%It is not difficult to adopt the proof of  Proposition 2.3 of \cite{DJ} to prove upper Gaussian estimate part of  \eqref{gaussian11}.
%One can use the same approach to show that
%\begin{equation}\label{gra}
%\left| X_i {h}_t(x, y)\right| \leq \frac{C}{\sqrt{t}V(x, \sqrt{t})}e^{-cd(x,y)^2\slash t}.
%\end{equation}
%Equivalently 
%$$
%|X_i {p}_t(x,y)| \le \frac{C}{{t}V(y,\sqrt{t})}\leqno{(G)}
%$$
%for all $x,y \in M$, $t>0$. It is known that, under $(D)$ upper estimates part of \eqref{gaussian11} $(G)$ self-improves back to 
%estimates \eqref{gra}.
%Now estimates  \eqref{gaussian11} follows from \cite[Corollary 2.2]{CS}.
%

\section{Examples and applications}\label{sec5}

\subsection{Grushin type operators with coefficient $x^{2k}$ replace by $\omega(x)^2$ for arbitrary polynomial $\omega$.}\label{sec51} %of the form of $L_\omega = \partial_x^2 +\omega(x)^2$ }

Consider the Euclidean plane  $M=\RR^2$  and for any function $f\in C^\infty_c (\RR^2)$
set 
\begin{equation}\label{om}
L_\omega f= -\partial_x^2f  -\omega(x)^2\partial_y^2f, 
\end{equation}
where $\omega(x)$ is a polynomial in $x$ of degree $m-1=m' \in \NN$. 
Note that $L_\omega$ can be represented as 
$$
L_\omega = -(Z_0^2+Z_1^2),
$$
where 
\begin{equation*}
{Z_0}f(x,y)=\partial_x f(x,y) \quad \mbox{\rm and } \quad 
{Z_1}f(x,y) = \omega(x) \partial_yf(x,y).
\end{equation*}

Note that the system $\{Z_0,Z_1\}$ generates  $m$-step nilpotent Lie algebra $\mathfrak n$ having the {\bf linear basis}
$Z_0$, $Z_1$, ..., $Z_{m}$ with the only  nontrivial commuting relations
$$ [Z_0,Z_j]=Z_{j+1}, \ \ \ j=1,2,3,...,m-1.$$
Indeed, simple calculation shows that 
\begin{equation*}\begin{split}
{Z_j}f(x,y)
&= \omega^{(j-1)}(x) \partial_yf(x,y) \ \ \text{for }   j=1,2,...,m,
\end{split}\end{equation*}
where $\omega^{(l)}$ is the $l$-th derivative of $\omega$. 
Thus the operator $L_\omega$ satisfies all  assumptions of Theorem~\ref{gaussian1}.

\subsection{Further generalisation of the operators $L_\omega$}

Note that one cannot formally apply the results from section \ref{sec51}
to the operator
$$
L=-\big(\partial_x^2+  (x^2+1) \partial_y^2\big).
$$
In this case the function  $\omega(x)=\sqrt{x^2+1}$ is not a polynomial and  the vector fields 
$\partial_x$ and $\sqrt{x^2+1}\partial_y$ do not generate a finite dimensional  Lie algebra.
However the doubling condition, the two-sided Gaussian estimates, the Poincar\'e inequality and all results which we discuss above  still hold in this setting. 
In fact we can use the proposed  approach to investigate all operators of the form 
 $$L=-\Big(\partial_x^2+  (\omega_1(x)^2+ \ldots+ \omega_n(x)^2) \partial_y^2\Big)$$
 for any family of polynomials $\omega_1, \ldots, \omega_n$.

Indeed in this case we can consider the vector field 
\begin{equation*}
{X_0}f(x,y)=\partial_x f(x,y).
\end{equation*}
Next,  we put 
\begin{equation*}\begin{split}
{X_k}f(x,y)
&= \omega_k(x) \partial_yf(x,y).
\end{split}\end{equation*}
for $k=1, \ldots, n$. Then we set 
$$
L=-\sum_{i=0}^n X_i^2
$$
It is easy to check that the system $\{X_1,X_2, \ldots, X_n\}$ generates a final dimensional nilpotent Lie algebra and satisfies H\"ormander's condition.

\subsection{Operators acting on $\RR^n\times \RR^m$}.
Another example which we can investigate in the proposed setting is the following 
operator
\begin{equation*}
L=-\sum_{k=1}^n \partial_{{x_k'}}^2 - \sum_{l=1}^m \sum_{i=1}^{I_l}\omega_{l,i}(x_1',\ldots, x_n')^2\partial_{{{x_l''}}}^2,
\end{equation*}
where $\omega_{l,i}$ are finite order polynomials. 
As before it is straightforward to represent $L$ as as sum of squares of vector 
fields, which generate finite dimensional nilpotent Lie algebra  and satisfy H\"ormander's condition. 

A particular instant of a degenerate operator of this from is the operator $L$   acting on the ambient space 
$\RR^2\times \RR=\{(x_1',x_2',x_1'') \colon \, x_1',x_2',x_1'' \in \RR  \}$ and defined by the formula
$$
L= -\Big(\partial_{{x_1'}}^2 +\partial_{{x_2'}}^2  + ({x_1'}^2+{x_2'}^2-1)^2\partial_{x_1''}^2\Big),
$$
which we mentioned in the introduction. 

\subsection{An example involving non-nilpotent Lie group. }\label{sec5.4}
Let $E$ be the universal covering of the Lie group of motion of a plane. Topologically the group is isomorphic to $\RR^3$
and the group action can be described by the following formula
\begin{eqnarray*}
&\hspace{-10cm}(t_1,x_1,y_1)(t_2,x_2,y_2)\\&=(t_1+t_2,x_1+x_2\cos t_1+y_2\sin t_1,y_1-x_2\sin t_1+y_2\cos t_1).
\end{eqnarray*}
The system of left-invariant vector fields on $E$ can be described in the following way
\begin{eqnarray*}
 T(t,x,y)&=&\partial_t\\
X(t,x,y)&=& -  \sin t \; \partial_x+\cos t \;\partial_y\\
 Y(t,x,y)&=& - \cos t \;\partial_x-\sin t \; \partial_y.
\end{eqnarray*}
The commutator relations are given by 
$$
[X,Y]=0, \quad [T,X]=Y, \quad [T,Y]=-X. 
$$
This group is the simplest example of the group of polynomial volume growth which is not nilpotent.

Now we are in a position to  describe an  interesting example of a sum of squares of vector fields operator, related to the group $E$.   It can be written as an operator acting on 
$\RR\times \RR=\{(x,y) \colon \, x,y  \in \RR  \}$ and defined by
$$
L= -\Big(\partial_{{x}}^2 + \sin^2 x \;  \partial_{y}^2\Big).
$$
The above operator is obtained as 
$$
{L}=-\tilde{T}^2 -\tilde{X}^2
$$
where $\tilde{T}=\partial_{{x}}$, $\tilde{X}=\sin x \; \partial_{y}$
and $ [\tilde{T},\tilde{X}]=\tilde{Y}=\cos x \; \partial_{y}$.
One can check easily that the algebra generated by $\tilde{T}$, $\tilde{X}$ and $\tilde{Y}$ satisfies the same
commutation relations and is isomorphic to the algebra of the group $E$ which is spanned by $X$, $Y$ and $Z$. 
%Thus the operator ${L}$ is equal to the image of the corresponding representation of the 
%group $E$ on $\RR\times \RR=\{(x,y) \colon \, x,y \in \RR  \}$. 
It is also easy to check that the system ${X,Y}$ satisfies H\"ormander's condition.
It is well-known and easy to check that the Lie algebra of $E$ is of type (R) so $E$ is a group of polynomial growth.

\subsection{An example of Grushin type operator acting  on compact manifolds.} The above example can be modified so that 
one can construct an instant of Grushin type operators acting on compact manifolds. 
To that end, consider the torus  which is a product of two circles  $\Pi^2=\{(\theta_1,\theta_2) \colon \, \theta_1,\theta_2 \in \Pi  \}$.
We can consider the operator 
$$
L= -\Big(\partial_{{\theta_1}}^2 + \sin^2 \theta_1 \;  \partial_{\theta_2}^2\Big).
$$
In this example $L=X^2+Y^2$
where 
$$X= \partial_{\theta_1}, \quad Y= \sin \theta_1\; \partial_{\theta_2}, \quad [X,Y] = \cos \theta_1\; \partial_{\theta_2}.$$
One can verify in a standard way that $X, Y$ and $Z$ generates three dimensional Lie 
algebra of type $(R)$ which coincides with the Lie algebra considered in 
Section~\ref{sec5.4}.

\bigskip

\noindent
{\bf Acknowledgements:}
A. Sikora was partly  supported by
Australian Research Council  Discovery Grant DP DP160100941. J. Dziuba\'nski was
partly supported by the National Science Centre, Poland (Narodowe Centrum Nauki), Grant 2017/ 25/B/ST1/00599. 

The authors would like to thank Jan Dymara and Alessandro Ottazzi
for pointing out to useful references in Lie groups theory.

\end{document}